\newtheorem{thm}{Theorem}[section]
\newtheorem{cor}[thm]{Corollary}
\newtheorem{lem}[thm]{Lemma}
\newtheorem{prop}[thm]{Proposition}
\theoremstyle{definition}
\newtheorem{defn}[thm]{Definition}
\newcommand{\continuum}{\ensuremath{\mathfrak{c}}}
\newcommand{\czero}{\ensuremath{c_0}}
\newcommand{\czerok}[1]{\ensuremath{c_0({#1})}}
\newcommand{\Gdelta}{\ensuremath{G_\delta}}
\newcommand{\ind}[1]{\ensuremath{\mbox{\boldmath{$1$}}_{#1}}}
\newcommand{\mapping}[3]{\ensuremath{{#1}:{#2}\longrightarrow{#3}}}
\newcommand{\nat}{\mathbb{N}}
\newcommand{\oneton}[2]{\ensuremath{{#1}_1,\ldots,{#1}_{#2}}}
\newcommand{\pnorm}[2]{\ensuremath{\|{#1}\|_{#2}}}
\newcommand{\real}{\mathbb{R}}
\newcommand{\restrict}[1]{\ensuremath{\!\!\upharpoonright_{#1}}}
\newcommand{\setcomp}[2]{\ensuremath{\left\{{#1}\;:\;\,{#2}\right\}}}
\newcommand{\tri}{{\displaystyle |\kern-.9pt|\kern-.9pt|}}
\newcommand{\wone}{\ensuremath{\omega_1}}
\DeclareMathOperator{\card}{card} %
\DeclareMathOperator{\dom}{dom} %
\DeclareMathOperator{\ran}{ran} %
\newcommand{\st}{($*$)}
\begin{document}
\title{Tree Duplicates, $\Gdelta$-diagonals and Gruenhage spaces}
\begin{abstract}
We present an example in ZFC of a locally compact, scattered
Hausdorff non-Gruenhage space $D$ having a $\Gdelta$-diagonal. This
answers a question posed by Orihuela, Troyanski and the author in a
study of strictly convex norms on Banach spaces. In addition, we
show that the Banach space of continuous functions $C_0(D)$ admits a
$C^\infty$-smooth bump function.
\end{abstract}

\author{Richard J.\ Smith}
\address{School of Mathematical Sciences, University College Dublin, Belfield, Dublin 4, Ireland}
\email{richard.smith@ucd.ie}
\subjclass[2000]{46B03, 54G12}
\date{\today}
\maketitle

\section{Introduction}

All topological spaces considered in this note will be Hausdorff.
Recall that a norm on a Banach space is {\em strictly convex} if
every element of the unit sphere is an extreme point of the unit
ball. The authors of \cite{ost:11} introduced the following
topological property to help understand the nature of strictly
convex norms.

\begin{defn}[{\cite[Definition 2.6]{ost:11}}]\label{star}
We say that a topological space $X$ has {\st} if there exists a
sequence $(\mathscr{U}_n)_{n=1}^\infty$ of families of open subsets
of $X$, with the property that given any $x,y \in X$, there exists
$n \in \nat$ such that
\begin{enumerate}
\item $\{x,y\} \cap \bigcup\mathscr{U}_n$ is non-empty, and
\item $\{x,y\} \cap U$ is at most a singleton for all $U \in \mathscr{U}_n$.
\end{enumerate}
\end{defn}

If $(\mathscr{U}_n)_{n=1}^\infty$ satisfies properties (1) and (2)
of Definition \ref{star} then we will call it a {\em
{\st}-sequence}. This notion can be regarded as a `point-separation'
property, in the sense that it specifies in advance a family of open
sets which can separate pairs of distinct points in a controlled
way. It generalises the extensively studied {\em $\Gdelta$-diagonal}
property.

\begin{defn}
A space $X$ has a $\Gdelta$-diagonal if its diagonal is a
$\Gdelta$ set in $X^2$ or, equivalently, if there is a sequence
$(\mathscr{G}_n)_{n=1}^\infty$ of open covers of $X$, such that
given $x,y \in X$, there exists $n$ with the property that $\{x,y\}
\cap U$ is at most a singleton for all $U \in \mathscr{G}_n$.
\end{defn}

See \cite[Section 2]{gru:84} for a comprehensive introduction to spaces
with $\Gdelta$-diagonals. All spaces having a $\Gdelta$-diagonal
have {\st}, and if $L$ is a locally compact space having {\st} then
so does its 1-point compactification $L \cup \{\infty\}$: simply
adjoin to any {\st}-sequence for $L$ the singleton family $\{L\}$,
which separates all points in $L$ from $\infty$.

While compact spaces having $\Gdelta$-diagonals are metrisable (cf.\
\cite[Theorem 2.13]{gru:84}), compact spaces having {\st} can be
highly non-metrisable. The next definition presents another way in
which points can be separated by a family of open sets, over which
we have some control.

\begin{defn}[cf.\ {\cite[p.\ 372]{gru:87}}]\label{gruenhage}
A topological space $X$ is called {\em Gruenhage} if there exists a
sequence $(\mathscr{U}_n)_{n=1}^\infty$ of families of open subsets
of $X$, and sets $R_n$, $n \geq 1$, with the property that
\begin{enumerate}
\item if $x,y \in X$ then there exists $n \in \nat$ and $U \in
\mathscr{U}_n$, such that $\{x,y\} \cap U$ is a singleton, and
\item $U \cap V = R_n$ whenever $U,V \in \mathscr{U}_n$ are distinct.
\end{enumerate}
\end{defn}

Any Gruenhage space has {\st} \cite[Proposition 4.1]{ost:11}, and
there are plenty of compact non-metrisable Gruenhage spaces
\cite[Section 4]{ost:11}. As well as examples, some general
topological consequences of {\st} can be found in \cite[Section
4]{ost:11}.

The relevance of spaces having {\st} to the geometry of Banach
spaces is partly explained by the next result. Recall that a
topological space is {\em scattered} if every non-empty open subset
admits a relatively isolated point.

\begin{thm}[{\cite[Theorem 3.1]{ost:11}}]\label{scattered}
Let $K$ be a scattered compact space. Then $C(K)$ admits an
equivalent norm with a strictly convex dual norm if and only if $K$
has {\st}. Moreover, the norm can be a lattice norm.
\end{thm}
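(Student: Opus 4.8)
The plan is to prove the two implications separately, using throughout that, since $K$ is scattered and compact, it is zero-dimensional and every Radon measure on it is atomic, so that $C(K)^*=M(K)$ is isometric to $\ell_1(K)$ via $\mu\mapsto(\mu(\{x\}))_{x\in K}$ and $x\mapsto\delta_x$ is a $w^*$-continuous embedding of $K$ into $M(K)$. Write $\trinormdot$ for a candidate dual norm on $M(K)$ and $\ptrinormdot{*}$ for its predual norm on $C(K)$, so that $\trinorm{\mu}=\sup\{\langle\mu,F\rangle:\ptrinorm{F}{*}\le1\}$. For the \emph{necessity} of \st, suppose $\trinormdot$ is strictly convex with $c\normdot_{TV}\le\trinormdot\le C\normdot_{TV}$. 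Fixing an enumeration $(q_n)$ of $\rat\cap(0,C)$, I put $\mathscr{U}_n=\{\{x\in K:F(x)>q_n\}:\ptrinorm{F}{*}\le1\}$, a family of open subsets of $K$. Given distinct $x,y$, the masses $\delta_x,\delta_y$ are not positive multiples of one another, so strict convexity gives $\trinorm{\tfrac12(\delta_x+\delta_y)}<\tfrac12(\trinorm{\delta_x}+\trinorm{\delta_y})\le\max(\trinorm{\delta_x},\trinorm{\delta_y})$. I then choose $n$ with $q_n$ in the non-empty interval $(\trinorm{\tfrac12(\delta_x+\delta_y)},\max(\trinorm{\delta_x},\trinorm{\delta_y}))$: clause (1) of Definition \ref{star} holds because $\sup\{F(x):\ptrinorm{F}{*}\le1\}=\trinorm{\delta_x}$, so some admissible $F$ has $\max(F(x),F(y))>q_n$, while clause (2) holds because $\min(F(x),F(y))\le\langle\tfrac12(\delta_x+\delta_y),F\rangle\le\trinorm{\tfrac12(\delta_x+\delta_y)}<q_n$ for every admissible $F$, so no member of $\mathscr{U}_n$ contains both points.

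For the \emph{sufficiency}, and the lattice refinement, let $(\mathscr{U}_n)$ be a \st-sequence. Since $K$ is zero-dimensional I first refine, replacing each $U\in\mathscr{U}_n$ by the clopen subsets of $U$; this preserves both clauses of Definition \ref{star}, so I may assume every member of every $\mathscr{U}_n$ is clopen, whence each $\mu\mapsto|\mu|(U)$ is a $w^*$-lower semicontinuous seminorm. For each $n$ I would form
\[
q_n(\mu)=\sup\left\{\left(\sum_{i=1}^{k}|\mu|(U_i)^2\right)^{1/2}:U_1,\dots,U_k\in\mathscr{U}_n\text{ pairwise disjoint}\right\},
\]
which depends only on $|\mu|$ and satisfies $q_n(\mu)\le\norm{\mu}_{TV}$, and set $\trinorm{\mu}^2=\norm{\mu}_{TV}^2+\sum_n2^{-n}q_n(\mu)^2$. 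This is equivalent, $w^*$-lower semicontinuous (hence a dual norm) and, being built from the variation, monotone, so its predual norm on $C(K)$ is a lattice norm, delivering the ``moreover'' clause.

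The crux is strict convexity, and this is where I expect the real work to lie. If $\trinorm{\mu}=\trinorm{\nu}$ and $\trinorm{\tfrac12(\mu+\nu)}=\trinorm{\mu}$, the standard equality analysis of an $\ell_2$-sum of seminorms forces $\normdot_{TV}$ and every $q_n$ to be additive on the pair and to satisfy $q_n(\mu)=q_n(\nu)$. I would then invoke \st: for atoms $x\neq y$ on which $\mu,\nu$ differ, some level $n$ has no member containing both, and the supremum-over-disjoint-subfamilies structure of $q_n$ either makes $q_n(\tfrac12(\mu+\nu))$ strictly smaller than $\tfrac12(q_n(\mu)+q_n(\nu))$ (two disjoint sets catching $x$ and $y$ yield $\tfrac1{\sqrt2}$ against the additive value $1$) or else produces unequal values $q_n(\mu)\neq q_n(\nu)$; either outcome contradicts the equalities just derived. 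The genuine difficulty is that the families $\mathscr{U}_n$ are in general neither countable nor point-finite, so a single atom may lie in uncountably many members of one $\mathscr{U}_n$ and no coordinatewise injection of $M(K)$ into $\czerok{\Gamma}$ or $\lpk{2}{\Gamma}$ is available; one must instead push this local separation through the global variation $|\mu|$, for arbitrary signed atomic $\mu,\nu$, while keeping the norm $w^*$-lower semicontinuous and monotone. The necessity direction, by contrast, is the elementary slicing argument above.
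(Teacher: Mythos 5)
Your necessity argument is correct: it is the standard slicing argument, and as you note it needs neither scatteredness nor the identification of $C(K)^*$ with $\lpk{1}{K}$. The routine parts of your sufficiency half are also fine (reduction to clopen members of the $\mathscr{U}_n$, $\weakstar$-lower semicontinuity of $\mu \mapsto |\mu|(U)$ for clopen $U$, the monotonicity/lattice remark, and the equality analysis for an $\ell_2$-sum of seminorms). However, the step you leave open is a genuine gap, and it is worse than an unfinished detail: the specific norm you construct is in general \emph{not} strictly convex, so the dichotomy you propose (``either $q_n$ fails to be additive on the pair, or $q_n(\mu) \neq q_n(\nu)$'') is false. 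Take $K = \{a,b,c,d\}$ discrete, with the clopen {\st}-sequence $\mathscr{U}_1 = \{\{a,b\},\{a,c\}\}$, $\mathscr{U}_2 = \{\{a\}\}$, and $\mathscr{U}_n = \varnothing$ for $n \geq 3$: every pair containing $a$ is separated at level $2$, and every other pair at level $1$, so Definition \ref{star} holds. Since the two members of $\mathscr{U}_1$ meet at $a$, the only disjoint subfamilies of $\mathscr{U}_1$ are singletons, whence $q_1(\mu) = \max\{|\mu|(\{a,b\}),\, |\mu|(\{a,c\})\}$ and $q_2(\mu) = |\mu|(\{a\})$. Now let $\mu = \delta_b + \delta_d$ and $\nu = \delta_b + \delta_c$. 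Then $\mu \neq \nu$, yet $\mu$, $\nu$ and $\tfrac12(\mu+\nu)$ all have total variation $2$, $q_1$-value $1$ and $q_2$-value $0$, so $\trinorm{\mu} = \trinorm{\nu} = \trinorm{\tfrac12(\mu+\nu)}$. Indeed, no norm that is a function of $(\pnormdot{TV}, q_1, q_2)$ alone can be strictly convex on this example, so no reweighting of your $\ell_2$-sum can rescue the construction.

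The instructive point is that {\st} \emph{does} distinguish $\mu$ from $\nu$ here: the member $\{a,c\} \in \mathscr{U}_1$, which witnesses the separation of the pair $\{c,d\}$, satisfies $|\mu|(\{a,c\}) = 0 \neq 1 = |\nu|(\{a,c\})$. Your supremum over disjoint subfamilies destroys exactly this information, because overlapping members force a maximum that cannot record \emph{which} member carries the mass. That is the real crux of the theorem, which your closing sentences concede but do not resolve: the individual seminorms $\mu \mapsto |\mu|(U)$, $U \in \mathscr{U}_n$, separate the relevant pairs of measures, but there may be uncountably many of them, and the whole difficulty is to compress them into countably many $\weakstar$-lower semicontinuous convex functions without losing the separation; your compression already loses it for a family of two sets. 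Note finally that the paper you are reading does not prove this statement at all --- it is imported from \cite[Theorem 3.1]{ost:11} --- so the comparison to make is with the construction there, which is precisely engineered to overcome this aggregation problem and is substantially more involved than a disjoint-subfamily supremum.
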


Here, $C(K)$ denotes the Banach space of continuous real-valued
functions on $K$. Gruenhage spaces were introduced in \cite{gru:87}
for reasons other than Banach space geometry, but have found a place
in this field nonetheless.

\begin{thm}[{\cite[Theorem 7]{smith:09}}]\label{smithgru}
If $K$ is Gruenhage compact then $C(K)$ admits an equivalent lattice
norm with a strictly convex dual norm.
\end{thm}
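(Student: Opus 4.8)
The plan is to build an equivalent dual norm on $M(K)=\dual{C(K)}$ that is simultaneously strictly convex and \emph{absolute} (i.e.\ a function of $|\mu|$ only), since the predual of an absolute dual norm is a lattice norm on $C(K)$. Throughout I write $\norm{\mu}=|\mu|(K)$ for the total variation norm, and I would exploit two standard facts: for any open set $W\subseteq K$ the map $\mu\mapsto|\mu|(W)$ is $\weakstar$-lower semicontinuous, being the supremum of the $\weakstar$-continuous functionals $\mu\mapsto\int f\,\md\mu$ over $f\in C(K)$ with $|f|\leq 1$ and $\supp f\subseteq W$; and a norm on the dual space is a dual norm precisely when it is $\weakstar$-lower semicontinuous.

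First I would convert the Gruenhage data into seminorms. Let $(\mathscr{U}_n)$ and $(R_n)$ witness Definition \ref{gruenhage}. Since each $R_n=U\cap V$ is itself open, the sets $W_U:=U\setminus\closure{R_n}$, for $U\in\mathscr{U}_n$, are open and pairwise disjoint, so that $\sum_{U\in\mathscr{U}_n}|\mu|(W_U)\leq\norm{\mu}$ with only countably many nonzero terms. I would then set $q_n(\mu)^2=\sum_{U\in\mathscr{U}_n}|\mu|(W_U)^2$. As a supremum over finite subfamilies of sums of $\weakstar$-lower semicontinuous functions, $q_n^2$ is $\weakstar$-lower semicontinuous, and $q_n$ is an absolute seminorm with $q_n\leq\norm{\cdot}$. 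Finally I would define
$$\trinorm{\mu}^2=\norm{\mu}^2+\sum_{n=1}^\infty 2^{-n}q_n(\mu)^2.$$
This series converges and satisfies $\norm{\mu}^2\leq\trinorm{\mu}^2\leq 2\norm{\mu}^2$, each summand is $\weakstar$-lower semicontinuous, and the whole expression depends only on $|\mu|$. Hence $\trinormdot$ is an equivalent absolute dual norm, and its predual is an equivalent lattice norm on $C(K)$.

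The crux is strict convexity. Suppose $\trinorm{\mu}=\trinorm{\nu}=\trinorm{\frac{\mu+\nu}{2}}$. Since $\trinormdot^2$ is a sum of squares of seminorms and $t\mapsto t^2$ is strictly convex, every summand must be affine along the segment: I would extract $\norm{\mu}=\norm{\nu}$ together with $\norm{\mu+\nu}=\norm{\mu}+\norm{\nu}$, and for each $n$ both $q_n(\mu)=q_n(\nu)$ and equality in the triangle inequality for $q_n$. The total-variation equality $|\mu+\nu|(K)=|\mu|(K)+|\nu|(K)$, combined with the domination $|\mu+\nu|\leq|\mu|+|\nu|$, forces the \emph{alignment} $|\mu+\nu|=|\mu|+|\nu|$ as measures, so that $|\tfrac{\mu+\nu}{2}|(W)=\tfrac12(|\mu|(W)+|\nu|(W))$ for every Borel $W$. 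Substituting this into the equality case of the $\ell_2$ triangle inequality for each $q_n$ then yields $|\mu|(W_U)=|\nu|(W_U)$ for every $U\in\mathscr{U}_n$ and every $n$.

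The remaining step is where I expect the genuine difficulty, and where the Gruenhage structure must be pushed hardest: from the agreement of $|\mu|$ and $|\nu|$ on the family $\{W_U\}$ I must deduce $|\mu|=|\nu|$, and then combine this with alignment to reach $\mu=\nu$. Definition \ref{gruenhage}(1) only guarantees that $\{W_U\}$ \emph{separates points}, and a point-separating family of open sets need not generate the Borel structure nor separate measures. To bridge this gap I would have to arrange the Gruenhage families so that $\{W_U\}$ is measure-determining — for instance by refining $(\mathscr{U}_n)$ to a base whose generated algebra forms a $\pi$-system, and then invoking inner regularity of Radon measures together with a Dynkin-class argument. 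Carrying out this refinement while preserving conditions (1) and (2) of Definition \ref{gruenhage}, and keeping each $q_n$ a $\weakstar$-lower semicontinuous seminorm, is the technical heart of the matter; once measure-determination is in hand, $|\mu|=|\nu|$ and alignment give $\mu=\nu$, which is the strict convexity sought.
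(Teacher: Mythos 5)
Note first that the paper itself contains no proof of this statement: Theorem \ref{smithgru} is imported verbatim from \cite{smith:09}, so your proposal can only be measured against the proof in that reference, not against anything in this note.

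Your reduction is the standard one and is correct as far as it goes: the sets $W_U=U\setminus\closure{R_n}$ are open and pairwise disjoint, $\mu\mapsto|\mu|(W)$ is $\weakstar$-lower semicontinuous for open $W$, your $\trinormdot$ is an equivalent, absolute, $\weakstar$-lower semicontinuous (hence dual, lattice-predual) norm, and the convexity argument does extract the alignment $|\mu+\nu|=|\mu|+|\nu|$ together with $|\mu|(W_U)=|\nu|(W_U)$ for every $U$. But the proof stops exactly where the theorem starts. The step you defer --- from agreement of $|\mu|$ and $|\nu|$ on the point-separating family $\{W_U\}$ to $|\mu|=|\nu|$ --- is not a routine refinement, because the implication you would need (point-separating $\Rightarrow$ measure-determining) is false even for countable families of open subsets of $[0,1]$: taking $a<c<b<d$, the distinct measures $\lambda_1=\frac12(\delta_a+\delta_b)$ and $\lambda_2=\frac12(\delta_c+\delta_d)$ have equal total mass and agree on every member of the countable point-separating open family consisting of a base of $[0,1]\setminus\{a,b,c,d\}$ together with the four sets $B_\ep(p)\cup B_\ep(q)$, $p\in\{a,b\}$, $q\in\{c,d\}$, with $\ep$ small. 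Your suggested repair --- closing the families under intersection to obtain a generating $\pi$-system --- fights directly against condition (2) of Definition \ref{gruenhage}: it is precisely the constancy of pairwise intersections that makes each family $\{W_U\}_{U\in\mathscr{U}_n}$ disjoint and your $q_n$ summable and readable in the equality case, and nothing in your sketch shows a Gruenhage structure can be refined to a measure-determining one while keeping (1) and (2). There is also a smaller unaddressed defect: replacing $U$ by $U\setminus\closure{R_n}$ can destroy condition (1), since a pair $x,y$ whose only separating sets $U\in\mathscr{U}_n$ have $x\in\closure{R_n}\setminus R_n$ is separated by no $W_U$; this is why the literature works with the non-open sets $U\setminus R_n$ plus the singleton families $\{R_n\}$, at the cost of extra care with lower semicontinuity.

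The gap is not incidental; it marks the exact boundary between Theorem \ref{scattered} and Theorem \ref{smithgru}. For scattered $K$ every element of $\dual{C(K)}$ is purely atomic, so point-separation of $K$ does separate measures --- which is why {\st} suffices there. For general Gruenhage $K$ measures may be non-atomic, and the proof in \cite{smith:09} never attempts to make a family of subsets of $K$ measure-determining: it transfers the Gruenhage structure from $K$ to the dual ball $(\ball{\dual{C(K)}},\weakstar)$ and performs the separation between measures directly, building the norm from convex $\weakstar$-lower semicontinuous functions attached to $\weakstar$-open sets of measures (refined, via convexity arguments, to slice-like sets). In short, your first two-thirds is a clean and correct reduction, but the missing third is the content of the cited theorem.
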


Theorem \ref{scattered} is not a consequence of Theorem
\ref{smithgru}. Under the continuum hypothesis, there is a scattered
compact, non-Gruenhage space having {\st} \cite[Example
4.10]{ost:11}. The purpose of this note is to show that such an
example exists in ZFC. For more information about how these and
related classes of topological spaces fit into Banach space theory,
we refer the reader to \cite{st:10,ost:11}.

It turns out that if $X$ has cardinality at most the continuum
$\continuum$, then we have a much more straightforward description
of Gruenhage's property available, which we will put to use in the
next section.

\begin{prop}{\cite[Proposition 2]{st:10}}\label{oddity}
Let $X$ be a topological space with $\card{X} \leq \continuum$. Then
$X$ is Gruenhage if and only if there is a sequence
$(U_n)_{n=1}^\infty$ of open subsets of $X$ with the property that
if $x,y \in X$, then $\{x,y\} \cap U_n$ is a singleton for some $n$.
\end{prop}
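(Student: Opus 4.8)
The plan is to prove the two implications of the equivalence separately, noting at the outset that the cardinality hypothesis $\card{X} \leq \continuum$ is only needed for the forward direction. For the easy (reverse) direction, suppose we are given a sequence $(U_n)$ of open sets with the stated separation property. I would simply set $\mathscr{U}_n = \{U_n\}$ and $R_n = \emptyset$ for every $n$. Condition (2) of Definition \ref{gruenhage} then holds vacuously, since no family contains two distinct members, while condition (1) is exactly the hypothesis; hence $X$ is Gruenhage. This argument uses nothing about $\card{X}$.

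For the forward direction, assume $X$ is Gruenhage, witnessed by families $(\mathscr{U}_n)$ and sets $R_n$. I would first record three structural facts. First, for distinct $U,V \in \mathscr{U}_n$ we have $U \cap V = R_n$, so $R_n$ is itself \emph{open}, and whenever $\card{\mathscr{U}_n} \geq 2$ we have $R_n \subseteq U$ for every $U \in \mathscr{U}_n$. Second, the ``petals'' $U \setminus R_n$ are pairwise disjoint, so a point $x \notin R_n$ lies in at most one member of $\mathscr{U}_n$. Third, choosing a point from each nonempty petal injects $\mathscr{U}_n$ into $X$, whence $\card{\mathscr{U}_n} \leq \continuum$. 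Discarding empty families and handling each singleton family $\{U\}$ by adjoining $U$ directly to the sequence under construction, I may assume $\card{\mathscr{U}_n} \geq 2$. Using the third fact together with $\card{X} \leq \continuum$, I would fix an injection $U \mapsto d(U)$ of $\mathscr{U}_n$ into $2^\nat$, avoiding the constant zero sequence, and set
\[
W_{n,k} = \bigcup \{ U \in \mathscr{U}_n : d(U)_k = 1 \}
\]
for each $k \in \nat$, which is open. The proposed sequence then consists of $X$, all the sets $R_n$, all the sets $W_{n,k}$, and the adjoined singletons; this is countable.

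To verify separation, given distinct $x,y$ I would invoke Definition \ref{gruenhage}(1) to obtain $n$ and $U \in \mathscr{U}_n$ with, say, $x \in U$ and $y \notin U$, which forces $y \notin R_n$. If $x \in R_n$, then $R_n$ itself separates $x$ from $y$. Otherwise $x,y \notin R_n$, so each lies in at most one petal, and membership of $x$ (respectively $y$) in $W_{n,k}$ is governed entirely by $d(U)_k$ (respectively $d(V_y)_k$, where $V_y$ is the unique petal containing $y$, if one exists). Since the codes are distinct and nonzero, choosing a coordinate $k$ where $d(U)$ and $d(V_y)$ disagree — or, should $y$ lie in no petal, any $k$ with $d(U)_k = 1$ — yields a set $W_{n,k}$ meeting $\{x,y\}$ in exactly one point. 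The case $x = y$ is handled by $X$.

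I expect the main subtlety to be the role of the common core $R_n$: the coding via the $W_{n,k}$ distinguishes points in distinct petals but is blind to the difference between a point of $R_n$, which lies in \emph{every} member of $\mathscr{U}_n$, and a point of a single petal. The clean resolution is the observation that $R_n$ is open, so it may be added to the sequence outright; after that, the coding argument need only separate points sitting in different petals, which is precisely the situation where distinct nonzero binary codes suffice. It is exactly at this coding step that the bound $\card{X} \leq \continuum$ is used, to replace the potentially large families $\mathscr{U}_n$ by countably many single open sets.
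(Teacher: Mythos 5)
Your proof is correct. Note that this paper does not actually prove Proposition \ref{oddity}; it is quoted from \cite[Proposition 2]{st:10}, so there is no in-paper argument to compare against. Your route --- observing that for $\card{\mathscr{U}_n}\geq 2$ the common core $R_n=U\cap V$ is open and contained in every member, that the ``petals'' $U\setminus R_n$ are pairwise disjoint (so $\card{\mathscr{U}_n}\leq\continuum$), and then coding members by nonzero elements of $2^\nat$ to replace each family by the countably many open unions $W_{n,k}$ together with $R_n$ --- is essentially the standard argument behind the cited result, and your case analysis in the verification (core vs.\ petal, $y$ in some petal vs.\ in none) is complete. One trivial imprecision: a member with empty petal satisfies $U\subseteq R_n\subseteq U$, i.e.\ $U=R_n$, so strictly speaking the petal-choice map injects only the members with nonempty petals into $X$; since at most one member can equal $R_n$, the bound $\card{\mathscr{U}_n}\leq\continuum$ stands and nothing downstream is affected. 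Your placement of the cardinality hypothesis is also right: it enters only at the coding step, and the reverse implication (singleton families, empty $R_n$, condition (2) vacuous) needs no hypothesis at all.
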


The basic idea behind the example remains the same as that of
\cite[Example 4.10]{ost:11}. We take a topological space
$X$ of cardinality $\continuum$ and endow a `duplicate' $D = X \times \{1,-1\}$
with a new topology, the basic open sets of which use the existing
structure of $X$ to `oscillate' rapidly between the levels $+1$ and
$-1$. This oscillation induces a non-trivial interaction between
the levels and will make it difficult to separate all the `problem
pairs' of the form $(x,1),(x,-1)$, $x \in X$, in the manner of
Proposition \ref{oddity}. This will render the space non-Gruenhage.
However, at the same time, we have construct $D$ delicately enough to
ensure that we don't lose the other properties that we want it to have.

We shall use a particular tree in its standard interval topology
as our starting point. Before proceeding with the construction, we
should point out that trees by themselves cannot furnish us with a
desired example. According to \cite[Theorem 4.6]{ost:11}, if $\Upsilon$
is any tree which is Hausdorff in its standard interval topology,
then $\Upsilon$ is Gruenhage if and only if it has {\st}. In particular,
$\Upsilon$ has a $\Gdelta$-diagonal in its standard interval
topology if and only if it is {\em $\real$-embeddable} \cite{hart:82},
which in turn means that it is certainly Gruenhage; see e.g.\ \cite[pp.\
20 -- 21]{ost:11} for more details.
%
%Finally, in the distant background, one can see the
%notion of a `ladder system' at work in the construction. However,
%unlike the more usual ladder systems on the uncountable ordinal
%$\wone$, which are vulnerable to the effects of additional axioms,
%our `ladder system' on the tree is explicitly defined.

\section{The $\Lambda$-Duplicate}

Recall that a {\em tree} is a partially ordered set
$(\Upsilon,\preccurlyeq)$ with the property that, given any $t \in
\Upsilon$, the set of predecessors $(0,t] = \setcomp{s \in
\Upsilon}{s \preccurlyeq t}$ is well ordered. For convenience, we
shall regard $0$ as an extra element, not in $\Upsilon$, such that
$0 \prec t$ for all $t \in \Upsilon$. Trees are natural
generalisations of ordinal numbers. We will use standard interval
notation throughout this note. For instance $(r,t]$, where $t \in
\Upsilon$ and $r \in \Upsilon \cup \{0\}$, is the set of all $s
\in\Upsilon$ satisfying $r \prec s \preccurlyeq t$. Other intervals
such as $[r,t)$ are defined accordingly. For further notation and
details about trees, we refer the reader to e.g.\ \cite{haydon:99}.

The tree in question was first considered by
Kurepa. Denoted by $\Lambda$ in \cite[Section 10]{haydon:99}, Kurepa's tree
is the set of injective functions $\mapping{t}{\alpha}{\omega}$ with
(countable) ordinal domain and coinfinite range, and where $s
\preccurlyeq t$ if and only if $t$ extends $s$. We shall regard
functions in the usual set-theoretic sense, that is, as sets of
ordered pairs, and with $\dom f$ and $\ran f$ the domain and range
of a function $f$, respectively. In this note, we treat $0$ as distinct
from the empty function $\varnothing$, the latter being the least
element of $\Lambda$.

A subset $A \subseteq \Lambda \cup \{0\}$ is
an {\em antichain} if no two distinct elements of $A$ are comparable
in the tree order. We shall define $\Lambda^+$ to be the set of
elements of $\Lambda$ with successor ordinal domain. It is widely
known and easy to show
that $\{0,\varnothing\} \cup \Lambda^+$ can be written as a countable
union of antichains: if
\[
A_0 = \{0\},\quad A_1 = \{\varnothing\} \quad\mbox{and}\quad A_{n+2} = \setcomp{t \in \Lambda^+}{t(\dom t-1) = n}
\]
then each $A_n$ is an antichain and $\{0,\varnothing\} \cup \Lambda^+ = \bigcup_{n=0}^\infty
A_n$.

The underlying set of our example is $D = \Lambda \times \{1,-1\}$.
We set up a function $\tau$ on pairs $(s,t)$, $s \in \Lambda \cup
\{0\}$, $t \in \Lambda$, $s \preccurlyeq t$, which will be central
to the definition of our topology on $D$. Given $s \prec t$, we
define a finite sequence of ordinals $\dom t = \beta_0
> \beta_1 > \beta_2 > \dots > \beta_k = \dom s$. Given $\beta_i
> \dom s$, let $\beta_{i+1} \in [\dom s,\beta_i)$ be the unique
ordinal with the property that
\[
t(\beta_{i+1}) \leq t(\xi) \qquad \mbox{ for all }\xi \in [\dom
s,\beta_i).
\]
As the $\beta_i$ are strictly decreasing, this process eventually
stops at some finite stage $k > 0$, with $\beta_k = \dom s$. Let
\[
\tau(s,t) = (\beta_k,\dots,\beta_1).
\]
For convenience, we also set $\tau(t,t)$ to be the empty sequence
for each $t \in \Lambda$.
%
%To see the relationship between $\tau$ and a natural `ladder system'
%on $\Lambda$, consider some $t \in \Lambda$ with limit ordinal domain.
%Let $\alpha_1 \in \dom t$ such that $t(\alpha_1) \leq t(\xi)$ for
%$\xi \in \dom t$. Then let $\alpha_2 \in (\alpha_1,\dom t)$ satisfy
%$t(\alpha_2) \leq t(\xi)$ for $\xi \in (\alpha_1,\dom t)$, and continue
%in this way. We can verify that the sequence $(\alpha_n)_{n=1}^\infty$
%converges to $\dom t$, and if we set $s_n = t\restrict{\alpha_n}$
%then $\tau(s_n,t)$ is the 1-element sequence $(\alpha_n)$ for all $n$.

The next lemma will help when we use the $\tau$
sequences to define a basis for our topology. We let $^\frown$ denote
concatenation of sequences.

\begin{lem}\label{localext}
Given $t,u \in \Lambda$, $t \prec u$, there exists $r \in \Lambda \cup \{0\}$, $r \prec
t$, such that $\tau(s,u)= \tau(s,t)^\frown\tau(t,u)$ for every $s
\in (r,t]$.
\end{lem}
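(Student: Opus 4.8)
The plan is to analyse directly the greedy ``argmin'' recursion that defines $\tau$. Recall that for $s\prec v$ the sequence $\tau(s,v)$ is built by starting at $\beta_0=\dom v$ and repeatedly letting $\beta_{i+1}$ be the position in $[\dom s,\beta_i)$ at which $v$ attains its minimum, stopping once the lower endpoint $\dom s$ is reached. Since $u$ extends $t$, the two functions agree on $[0,\dom t)$, so any argmin step taken over an interval contained in $[\dom s,\dom t)$ is governed by $t$ alone. The heart of the matter is therefore to show that, for $s$ close enough to $t$, the recursion computing $\tau(s,u)$ first exhausts the ``new'' part $[\dom t,\dom u)$ --- reproducing $\tau(t,u)$ exactly and ending that phase at the position $\dom t$ --- and only afterwards descends into $[\dom s,\dom t)$, where it must coincide with the recursion for $\tau(s,t)$.

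First I would record a monotonicity property of the values seen along $\tau(t,u)$. Writing $\dom u=\delta_0>\delta_1>\cdots>\delta_m=\dom t$ for the positions produced by the recursion for $\tau(t,u)$, each $\delta_{i+1}$ is the minimiser of $u$ over $[\dom t,\delta_i)$; as this interval is strictly contained in the previous one and omits the previous minimiser $\delta_i$, injectivity of $u$ forces $u(\delta_1)<u(\delta_2)<\cdots<u(\delta_m)$. In particular the largest value attained at any recorded position is $w:=u(\dom t)$, and every $\delta_i$ satisfies $u(\delta_i)\le w$.

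Next I would choose $r$. Because $u\restrict{\dom t}=t$ is injective into $\omega$, the set $E=\setcomp{\xi<\dom t}{t(\xi)\le w}$ is finite. If $E=\varnothing$ put $r=0$; otherwise let $\eta=\max E$ and put $r=t\restrict{\eta}$. In either case $r\in\Lambda\cup\{0\}$ and $r\prec t$, and by construction every $\xi\in(\eta,\dom t)$ has $t(\xi)>w$. Consequently, for any $s\in(r,t]$ we have $\dom s>\eta$, so the interval $[\dom s,\dom t)$ avoids $E$ and hence $\min\setcomp{t(\xi)}{\xi\in[\dom s,\dom t)}>w$ (when $s=t$ this interval is empty and the claimed identity is immediate).

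Finally I would run the induction. Fix $s\in(r,t]$ and compare the recursion for $\tau(s,u)$, whose positions I denote $\beta_i$, with that for $\tau(t,u)$. As long as the current position satisfies $\beta_i=\delta_i\ge\dom t$, the next step takes the argmin of $u$ over $[\dom s,\beta_i)=[\dom s,\dom t)\cup[\dom t,\delta_i)$; the minimum over the right-hand piece equals $u(\delta_{i+1})\le w$, while the minimum over the left-hand piece exceeds $w$ by the previous paragraph. Hence the argmin lands in $[\dom t,\delta_i)$ and agrees with the corresponding step of $\tau(t,u)$; inducting, $\tau(s,u)$ reproduces $\tau(t,u)$ down to and including the position $\dom t$. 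From $\dom t$ onward the recursion operates entirely within $[\dom s,\dom t)$, where $u=t$, so it coincides step for step with the recursion defining $\tau(s,t)$. Concatenating the two phases yields $\tau(s,u)=\tau(s,t)^\frown\tau(t,u)$, as required. The one point demanding care --- and the step I would treat as the main obstacle --- is ensuring that the argmin never slips prematurely into the old interval $[\dom s,\dom t)$ before the new part is exhausted; this is exactly what the uniform bound $w$ from the monotonicity step guarantees, and it is what dictates the choice of $r$.
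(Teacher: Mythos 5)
Your proof is correct and takes essentially the same route as the paper: the two key points --- the monotonicity $u(\delta_1)<u(\delta_2)<\cdots<u(\delta_m)=u(\dom t)$ along the positions of $\tau(t,u)$, and the choice of $r$ beyond the finitely many coordinates where $t$ takes values $\leq u(\dom t)$, which prevents the argmin from dropping below $\dom t$ prematurely --- are exactly the paper's. The only difference is organisational: the paper splits into cases according to whether $t=\varnothing$, $\dom t$ is a successor, or $\dom t$ is a limit (the first two being trivial), whereas your finite set $E$ and its maximum handle all three cases uniformly, and you spell out the two-phase induction that the paper leaves implicit.
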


\begin{proof}
If $t$ is the empty function $\varnothing$ then let $r=0$. If $t \in
\Lambda^+$ then we let $r = t\restrict{\dom t-1}$ be the immediate
predecessor of $t$ in the tree order. Now suppose that $\dom t$ is a
limit ordinal, and let $\tau(t,u) = (\beta_k,\dots,\beta_1)$. By
construction, we have
\[
u(\beta_1) < u(\beta_2) < \dots < u(\beta_k) = u(\dom t).
\]
Since $\dom t$ is a limit, there exists $\alpha < \dom t$ such that
$t(\eta) = u(\eta) > u(\dom t)$ whenever $\eta \in [\alpha,\dom t)$.
Set $r=t\restrict\alpha$, so that $\dom r = \alpha$. Let $s \in
(r,t]$ and
\[
\tau(s,u) = (\gamma_m,\dots,\gamma_1).
\]
By the choice of $\alpha$, we have ensured that $m \geq k$ and
$\gamma_i = \beta_i$ whenever $i \leq k$.
\end{proof}

It is time to define the basic open sets. Let $\ell(s,t)$ denote the
length of $\tau(s,t)$. Given $(t,i) \in D$ and $r \in \Lambda \cup
\{0\}$, $r \prec t$, let
\[
W(r,t,i) = \setcomp{(s,j) \in D}{s \in (r,t] \mbox{ and } j =
(-1)^{\ell(s,t)}i}.
\]
Observe that if $\mapping{\pi}{D}{\Lambda}$ is the natural
projection, then the restriction of $\pi$ to any $W(r,t,i)$ is
injective. Moreover, the images $\pi(W(r,t,i)) = (r,t]$ form the
usual basis of the standard interval topology on $\Lambda$.

\begin{prop}\label{basis}
The $W(r,t,i)$ form a basis for a locally compact scattered topology
on $D$.
\end{prop}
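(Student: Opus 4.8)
The plan is to verify the two base axioms and then deduce local compactness and scatteredness by transferring properties across the natural projection $\mapping{\pi}{D}{\Lambda}$. That the sets $W(r,t,i)$ cover $D$ is immediate: any $(t,i)$ lies in $W(r,t,i)$ for any $r \prec t$ (taking $r = 0$ when $t = \varnothing$), since $t \in (r,t]$ and $\ell(t,t) = 0$. The substance is the intersection axiom: given $(s,j) \in W(r_1,t_1,i_1) \cap W(r_2,t_2,i_2)$, I must produce a basic set containing $(s,j)$ inside the intersection.

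Here is the key idea, and the step I expect to be the main obstacle. Rather than taking the meet $t_1 \wedge t_2$ as the apex of the new basic set, I would take \emph{the point $s$ itself} as apex and assign it level $j$, that is, consider $W(r_3,s,j)$. The reason is that $s \preccurlyeq t_1$ and $s \preccurlyeq t_2$, so Lemma \ref{localext} applies to each pair $s \prec t_k$ (the case $s = t_k$ being trivial), yielding some $r^{(k)} \prec s$ with $\ell(w,t_k) = \ell(w,s) + \ell(s,t_k)$ for all $w \in (r^{(k)},s]$. This exact additivity of lengths is precisely what makes the oscillating levels consistent: since $j = (-1)^{\ell(s,t_k)} i_k$, for $w \in (r^{(k)},s]$ one computes $(-1)^{\ell(w,s)} j = (-1)^{\ell(w,s)+\ell(s,t_k)} i_k = (-1)^{\ell(w,t_k)} i_k$, so the level that $W(r_3,s,j)$ assigns to $w$ agrees with the level assigned by $W(r_k,t_k,i_k)$. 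Taking $r_3$ to be the largest of $r_1,r_2,r^{(1)},r^{(2)}$ (all predecessors of $s$, hence pairwise comparable, so the maximum exists and lies below $s$), one checks $(r_3,s] \subseteq (r_k,t_k]$ and that the levels match throughout, whence $W(r_3,s,j) \subseteq W(r_1,t_1,i_1) \cap W(r_2,t_2,i_2)$ while still containing $(s,j)$. The delicate point throughout is the parity bookkeeping of $\ell$, which behaves well only because Lemma \ref{localext} forces additivity on a tail $(r^{(k)},s]$.

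For local compactness I would first note that $\pi$ is continuous, since $\pi^{-1}((r,t]) = W(r,t,1) \cup W(r,t,-1)$ is open and the intervals $(r,t]$ form a base for $\Lambda$. The restriction $\pi\restrict{W(r,t,i)}$ is a continuous bijection onto $(r,t]$, and I would show it is in fact a homeomorphism by checking it is open; this again reduces to showing that the set on which two competing level conditions coincide is open, exactly the local consistency of signs exploited above and again delivered by Lemma \ref{localext}. Since $(r,t]$ is a well-ordered chain with a greatest element, its subspace topology coincides with its order topology, making it homeomorphic to a successor-ordinal interval and hence compact. Thus every point of $D$ has a neighbourhood base of compact open sets $W(r,t,i)$, so $D$ is locally compact; Hausdorffness follows by separating points with distinct $\Lambda$-coordinates through $\pi$, and separating $(t,1),(t,-1)$ by the disjoint sets $W(r,t,1)$ and $W(r,t,-1)$.

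Finally, for scatteredness, let $U \subseteq D$ be non-empty and let $x \in \pi(U)$ have minimal tree-height among elements of $\pi(U)$ (heights are ordinals, so a minimum exists); pick $i$ with $(x,i) \in U$. For any $r \prec x$, any $(w,j') \in W(r,x,i) \cap U$ has $w \in (r,x]$, so $w \preccurlyeq x$ and $w \in \pi(U)$; minimality of the height of $x$ forces $w = x$, whence $j' = i$. Thus $W(r,x,i) \cap U = \{(x,i)\}$, so $(x,i)$ is relatively isolated in $U$, proving that $D$ is scattered.
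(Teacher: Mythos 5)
Your proposal is correct, and its core coincides with the paper's: the verification of the intersection axiom by taking the common point $s$ itself as the apex of the new basic set, invoking Lemma \ref{localext} to obtain the additivity $\ell(w,t_k)=\ell(w,s)+\ell(s,t_k)$ on a tail $(r^{(k)},s]$, and then doing the parity bookkeeping, is exactly the paper's argument; your scatteredness check (minimal height in $\pi(U)$) and Hausdorff check are also the paper's, in light disguise. The one genuine divergence is compactness. The paper proves each $W(r,v,i)$ compact directly: given an open cover, it uses Lemma \ref{localext} to show that near any $(u,k)\in W(r,v,i)$ the set $W(r,v,i)$ agrees with a basic set $W(s,u,k)$ inside a member of the cover, then walks down the chain, terminating after finitely many steps because $[r,v]$ is well ordered --- it ``follows the method used to show that each $(r,t]$ is compact'' without ever formalizing a correspondence with $(r,t]$. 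You instead formalize that correspondence: $\pi\restrict{W(r,t,i)}$ is a continuous bijection onto $(r,t]$, it is open (this needs no new work --- your already-proved intersection axiom produces $W(r_3,s,j)$ inside any relative neighbourhood, and $\pi$ maps it onto the interval $(r_3,s]$), and $(r,t]$ with the induced topology is a well-ordered chain with greatest element, hence a successor ordinal in its order topology and therefore compact. Both routes rest on the same application of Lemma \ref{localext}; yours buys the cleaner structural fact that each basic set is canonically homeomorphic to an ordinal interval (so $D$ is ``locally the tree''), at the cost of checking that the induced and order topologies on $(r,t]$ agree, while the paper's direct cover argument avoids that bookkeeping. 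Two minor points worth making explicit if you write this up: your Hausdorff argument ``through $\pi$'' silently uses that $\Lambda$ itself is Hausdorff, i.e.\ that largest common predecessors exist in Kurepa's tree (the paper uses this meet explicitly), and your openness claim should be stated as the consequence of the intersection axiom indicated above rather than left as a reduction.
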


\begin{proof}
First, we show that these sets form a basis. If $(t,k) \in
W(r_1,u_1,i_1) \cap W(r_2,u_2,i_2)$ then, by Lemma \ref{localext}
and the fact that $r_1,r_2 \prec t$ are comparable, we can find $r
\in [\max\{r_1,r_2\},t)$ such that $\tau(s,u_j) =
\tau(s,t)^\frown\tau(t,u_j)$ whenever $s \in (r,t]$ and $j = 1,\,2$.
It follows that
\[
W(r,t,k) \subseteq W(r_1,u_1,i_1) \cap W(r_2,u_2,i_2).
\]
Indeed, if $(s,l) \in W(r,t,k)$ then $s \in (r,t] \subseteq
(r_j,u_j]$ and
\[
l = (-1)^{\ell(s,t)}k = (-1)^{\ell(s,t)}(-1)^{\ell(t,u_j)}i =
(-1)^{\ell(s,u_j)}i
\]
since $\ell(s,u_j) = \ell(s,t) + \ell(t,u_j)$. Therefore $(s,l) \in
W(r_1,u_1,i_1) \cap W(r_2,u_2,i_2)$ as required. We conclude that the
$W(r,t,i)$ form a basis for a topology on $D$.

Now we show that this topology is Hausdorff and scattered. Let
$(t_1,i_1), (t_2,i_2) \in D$ be distinct. If $t_1 \neq t_2$ then we
let $r$ be the largest common predecessor of these elements. It is
clear that $W(r,t_1,i_1) \cap W(r,t_2,i_2)$ is empty. Instead, if
$t_1 = t_2$ then $i_1 = -i_2$, so $W(0,t_1,i_1) \cap W(0,t_2,i_2)$
is empty. To see that the topology is scattered,
let $E \subseteq D$ be non-empty and find minimal $t \in \Lambda$,
subject to there being some $i$ for which $(t,i) \in E$. Then
$W(0,t,i) \cap E = \{(t,i)\}$.

Finally, we show that each $W(r,t,i)$ is compact. Suppose that
$(u,k) \in W(r,v,i) \cap U$, where $U$ is some open set. Again from
Lemma \ref{localext}, we know that we can find $s \in [r,u)$ such
that $\tau(t,v) = \tau(t,u)^\frown\tau(u,v)$ whenever $t \in (s,u]$.
Moreover, we can choose $s$ so that $W(s,u,k) \subseteq U$. If $t
\in (s,u]$ and $(t,l) \in W(r,v,i)$, then we have
\begin{eqnarray*}
l &=& (-1)^{\ell(t,v)}i \\
&=& (-1)^{\ell(t,v)}(-1)^{-\ell(u,v)}k \qquad\mbox{since }k =
(-1)^{\ell(u,v)}i\\
&=& (-1)^{\ell(t,u)}k \qquad\mbox{since }\ell(t,v) = \ell(t,u) +
\ell(u,v)
\end{eqnarray*}
and so $(t,l) \in W(s,u,k) \subseteq U$.

This will allow us to show that $W(r,v,i)$ is compact. The method
follows that used to show that each $(r,t]$ is compact in the usual
interval topology of $\Lambda$. If $W(r,v,i)$ is covered by a family
of open sets $\mathscr{U}$, we can find $U_1 \in \mathscr{U}$
covering $(v_1,i_1)$, where $v_1 = v$ and $i_1 = i$. From above,
there is some $v_2 \prec v_1$ such that $(t,l) \in U_1$ whenever
$(t,l) \in W(r,v,i)$ and $t \in (v_2,v_1]$. Then we pick $U_2 \in
\mathscr{U}$ covering $(v_2,i_2)$, where $i_2$ is the unique number
satisfying $(v_2,i_2) \in W(r,v,i)$, and continue. The process stops
at some finite $k > 1$, with $v_k = r$ and $W(r,v,i)$ covered by
$U_1,\dots,U_{k-1}$.
\end{proof}

\begin{defn}
We shall call $D$ above, together with this topology, the {\em
$\Lambda$-duplicate}.
\end{defn}

\begin{thm}
The $\Lambda$-duplicate has a $\Gdelta$-diagonal but is not
Gruenhage.
\end{thm}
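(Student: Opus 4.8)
The plan is to prove the two assertions separately; the $\Gdelta$-diagonal is the routine half, while the failure of the Gruenhage property is where the real work lies.

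For the $\Gdelta$-diagonal I would first isolate two structural facts about the basis. Each $W(r,t,i)$ projects under $\pi$ onto the chain $(r,t]$, so no basic set contains two $\pi$-incomparable points; and since $\pi\restrict W(r,t,i)$ is injective, no basic set contains both $(t,1)$ and $(t,-1)$. Hence, among the covers we build, the only pairs requiring genuine separation are the \emph{comparable} pairs $t_1\prec t_2$. To handle these I would introduce the weight $\phi(t)=\sum_{\xi<\dom t}2^{-(t(\xi)+1)}$. As each $t$ is injective its values are distinct, so the series converges; for $s\prec t$ the extra terms are positive, giving $\phi(s)<\phi(t)$; and when $\dom t$ is a limit the partial sums yield $\sup_{r\prec t}\phi(r)=\phi(t)$. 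Thus for each $n$ and each $t$ I may choose $r_n(t)\prec t$ with $\phi(t)-\phi(r_n(t))<2^{-n}$, taking the immediate predecessor (so a singleton) when $\dom t$ is a successor. Then $\mathscr{G}_n=\{W(r_n(t),t,i):t\in\Lambda,\ i=\pm1\}$ is an open cover, and if $t_1\prec t_2$ both lay in some member $W(r_n(u),u,i)$ then $r_n(u)\prec t_1\prec t_2\preccurlyeq u$ would force $\phi(u)-\phi(r_n(u))\geq\phi(t_2)-\phi(t_1)$, which exceeds $2^{-n}$ once $n$ is large; choosing such $n$ separates the pair. With the two facts above, $(\mathscr{G}_n)$ witnesses a $\Gdelta$-diagonal.

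For the Gruenhage property I would record that $\card D=\continuum$ and invoke Proposition~\ref{oddity}: it suffices to show that no sequence $(U_n)$ of open subsets of $D$ can meet every pair of distinct points in a singleton for some $n$. Assuming such a sequence existed, I would derive a contradiction by producing a single problem pair $(t,1),(t,-1)$ with $\dom t$ a limit ordinal that no $U_n$ separates. The governing local fact is that if $(t,i)$ lies in an open set $U$ then some basic neighbourhood with top exactly $t$ satisfies $W(s,t,i)\subseteq U$ (this is the neighbourhood-base property extracted in the proof of Proposition~\ref{basis}); and if $t\prec t'$ with $r'\prec t$ and $W(r',t',i')\subseteq U$, then $t$ occurs in this set at level $(-1)^{\ell(t,t')}i'$, so the parity of $\ell(t,t')$---computed from $\tau$ and behaving additively along the branch by Lemma~\ref{localext}---dictates which level of $t$ is dragged into $U$.

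The plan for the contradiction is a fusion. Assuming $(U_n)$ separates all pairs, I would build $t$ of countable limit domain as the union of an increasing chain $r_0\prec r_1\prec\cdots$ in $\Lambda$, diagonalising against the $U_n$ with bookkeeping that revisits each index cofinally. At a step devoted to $U_n$ I would use Lemma~\ref{localext} to extend the current stem so as to flip the parity of $\ell(\cdot\,,\text{stem})$, exploiting the level-flipping homeomorphism $(s,j)\mapsto(s,-j)$ (which carries $W(r,u,i)$ to $W(r,u,-i)$) to keep the two prospective neighbourhood traces of $(t,1)$ and of $(t,-1)$ symmetric with respect to $U_n$. The goal is that in the limit each $U_n$ either contains a tail of both traces or misses each of them cofinally, so that $(t,1)\in U_n\iff(t,-1)\in U_n$ for every $n$; this $t$ is then a problem pair separated by no $U_n$. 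Throughout I must keep the stem injective with coinfinite range, so that the limit genuinely lies in $\Lambda$.

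I expect the main obstacle to be exactly this limit step. Since membership $(t,\pm1)\in U_n$ is decided only by the \emph{tails} of the traces, no single finite extension fixes a verdict; the difficulty is to arrange infinitely many parity-balancing requirements coherently along one countable branch so that, simultaneously for every $n$, no $U_n$ can insert a basic neighbourhood around exactly one of $(t,1),(t,-1)$. This is precisely where the rapid oscillation engineered into $\tau$, together with its compatibility with the interval structure expressed by the concatenation $\tau(s,u)=\tau(s,t){}^\frown\tau(t,u)$ of Lemma~\ref{localext}, must be used in full, and verifying that the fusion survives the passage to the limit is where the delicate bookkeeping will be concentrated.
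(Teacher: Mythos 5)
Your first half is correct, and it takes a genuinely different route from the paper. You separate comparable pairs using the weight $\phi(t)=\sum_{\xi<\dom t}2^{-(t(\xi)+1)}$, a strictly increasing real-valued function on $\Lambda$ (in effect an $\mathbb{R}$-embedding of the tree), and cover $D$ by basic sets of $\phi$-length less than $2^{-n}$; since every basic set projects injectively into a chain, incomparable pairs and the pairs $(t,1),(t,-1)$ are handled automatically, and a member of $\mathscr{G}_n$ containing two comparable points must be $\phi$-short (your singleton members at successor domains can never contain two points --- a case worth stating explicitly, but trivial). The paper instead stays inside the $\tau$-machinery: it sets $p(s,t)=t(\beta_1)$, the minimum of $t$ on $[\dom s,\dom t)$, and uses the covers $\mathscr{G}_p$ consisting of the sets $V(u,i,p)=\setcomp{(t,j)}{t\preccurlyeq u,\ p(t,u)\geq p,\ j=(-1)^{\ell(t,u)}i}$, separating $u_1\prec u_2$ by choosing $p>u_2(\dom u_1)$. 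Your version is arguably more transparent; the paper's has the advantage of exercising the same objects ($\tau$, $\ell$) that drive the rest of the argument.

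The second half, however, is a plan rather than a proof, and what you leave open is exactly the substance of the theorem. You propose to build a single limit branch $t$ with $(t,1)\in U_n\iff(t,-1)\in U_n$ for every $n$ by ``parity-balancing'' at stages devoted to each $U_n$; but, as you yourself observe, membership of $(t,\pm1)$ in $U_n$ is a tail condition on the limit branch, and no finite stage of your construction constrains which basic neighbourhoods of the not-yet-constructed limit point the adversarially given open set $U_n$ will contain. Nothing in your sketch resolves this, and I do not see how the ``symmetric traces'' idea survives the limit as stated.

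The paper circumvents precisely this obstacle with an anchoring device that your proposal lacks. For each $t$ with $(t,i)\in U_n$ and $(t,-i)\notin U_n$, it chooses a basic witness $W(\theta(t),t,i)\subseteq U_n$ whose lower endpoint $\theta(t)$ lies in one of the countably many fixed antichains $A_m$ covering $\{0,\varnothing\}\cup\Lambda^+$. Within one resulting piece $E_{n,m,i}$, if $t\prec u$ then $\theta(t)$ and $\theta(u)$ are comparable elements of the antichain $A_m$, hence equal; so $(t,(-1)^{\ell(t,u)}i)\in W(\theta(u),u,i)\subseteq U_n$, and since $(t,-i)\notin U_n$ this forces $\ell(t,u)$ to be even. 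This converts the topological hypothesis into a purely combinatorial statement --- $\Lambda$ is a countable union of sets on each of which $\ell$ is even between comparable elements --- in which the troublesome tail-dependence has disappeared. Only then does a fusion run, and it needs a second idea you do not have: at stage $j$ one extends $t_j$ by the value $l_j$ (the second-least integer outside $\ran t_j$ and the reserved set), reserves $k_j$ (the least such integer), and forbids $k_j$ from all future ranges; minimality then forces $\tau(t_j,v)=(\dom t_j)$, hence $\ell(t_j,v)=1$, odd, for \emph{every} admissible $v$, killing one piece per stage, while the reserved values $k_j$ keep the union's range coinfinite so that the limit stays in $\Lambda$. The antichain anchoring and this reserved-value trick are the missing core; without them the contradiction at the limit cannot be extracted, so the non-Gruenhage half of your proposal is a genuine gap.
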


\begin{proof}
First, we show that $D$ has a $\Gdelta$-diagonal. Given $s \prec t$
and $\tau(s,t) = (\beta_k,\dots,\beta_1)$, we define $p(s,t) =
t(\beta_1)$. Note that $p(s,t) \leq t(\beta_k) = t(\dom s)$. We'll
set $p(t,t) = \infty$ for every $t \in \Lambda$, again for convenience.
For $(u,i) \in D$ and finite $p$, define
\[
V(u,i,p) = \setcomp{(t,j)}{t \preccurlyeq u,\,p(t,u) \geq p \mbox{
and } j = (-1)^{\ell(t,u)}i}.
\]
If $(t,j) \in V(u,i,p)$ then, by Lemma \ref{localext}, there exists
$r \prec t$ such that whenever $s \in (r,t]$, we have $\tau(s,u) =
\tau(s,t)^\frown\tau(t,u)$. Certainly, for such $s$, we get $p(s,u)
= p(t,u) \geq p$ and
\[
W(r,t,j) \subseteq V(u,i,p).
\]
Therefore, each $V(u,i,p)$ is open. We claim that if
\[
\mathscr{G}_p = \setcomp{V(u,i,p)}{(u,i) \in D}
\]
then $(\mathscr{G}_p)_{p=1}^\infty$ forms a $\Gdelta$-diagonal
sequence for $D$. Let $(u_1,i_1), (u_2,i_2) \in D$ be distinct, and
suppose that for some $(u,i) \in D$ and $p$ we have $(u_1,i_1),
(u_2,i_2) \in V(u,i,p)$. Since $u_1,\,u_2 \preccurlyeq u$, they are
comparable. Necessarily, $u_1 \neq u_2$, for otherwise we would have
\[
i_1 = (-1)^{\ell(u_1,u)}i = (-1)^{\ell(u_2,u)}i = i_2,
\]
giving $(u_1,i_1) = (u_2,i_2)$. Without loss of generality, assume
that $u_1 \prec u_2$. Then we get
\[
p \leq p(u_1,u) \leq u(\dom u_1) = u_2(\dom u_1).
\]
Consequently, if we are given distinct $(u_1,i_1), (u_2,i_2) \in D$,
then by choosing $p$ large enough, we can ensure that there is no $V
\in \mathscr{G}_p$ for which $(u_1,i_1), (u_2,i_2) \in V$. This
establishes that the $\Lambda$-duplicate has a $\Gdelta$-diagonal.

We shall suppose for a contradiction that $D$ is Gruenhage. As
$\card D = \continuum$, we can use Proposition \ref{oddity} to find
a sequence $(U_n)_{n=1}^\infty$ of open subsets of $D$ so that
given any $t \in \Lambda$, there exists $n$ for which
\[
\{(t,1),(t,-1)\} \cap U_n
\]
is a singleton. Set
\[
E_{n,i} = \setcomp{t \in \Lambda}{(t,i) \in U_n \mbox{ and }(t,-i)
\notin U_n}.
\]
We know that $\Lambda = \bigcup_{n,i} E_{n,i}$. Now we are going to
decompose each $E_{n,i}$ into countably many subsets. If $t \in
E_{n,i}$ then $(t,i) \in U_n$, so there exists some $\theta(t) \prec
t$, $\theta(t) \in \{0,\varnothing\} \cup \Lambda^+$, such that
$W(\theta(t),t,i) \subseteq U_n$. Set
\[
E_{n,m,i} = \setcomp{t \in E_{n,i}}{\theta(t) \in A_m},
\]
where the $A_m$ are the antichains defined at the beginning of the section.
Suppose that $t,u \in E_{n,m,i}$ and $t \prec u$. Since
$\theta(t),\theta(u) \prec u$ are comparable and $\theta(t),\theta(u) \in A_m$,
it follows that $\theta(u) = \theta(t) \prec t \prec u$. Now, we have
\[
(t,j) \in W(\theta(u),u,i) \subseteq U_n
\]
where $j = (-1)^{\ell(t,u)}i$. Since $t \in E_{n,i}$, we gather that
$j=i$, whence $\ell(t,u)$ is an even number.

To simplify the notation, we shall alter the indices and denote the
$E_{n,m,i}$ by $E_n$, $n < \infty$. In summary, we have shown that
if $D$ is Gruenhage then we can write $\Lambda =
\bigcup_{n=1}^\infty E_n$, where each $E_n$ has the property that
$\ell(t,u)$ is an even number whenever $t,u \in E_n$ and $t \prec
u$. In the final part of the proof, we use a Baire category type
argument (cf.\ \cite[Lemma 10.1]{haydon:99}) to show that no
decomposition of $\Lambda$ into such sets $E_n$ is possible.

Set $\Lambda_1 = \Lambda$ and let $m_1$ be minimal, subject to the
condition that there exists some $t_1 \in \Lambda_1 \cap E_{m_1}$.
Let
\[
k_1 = \min \omega\setminus \ran t_1, \quad l_1 = \min
\omega\setminus (\ran t_1 \cup \{k_1\}), \quad u_1 = t_1 \cup
\{(\dom t_1,l_1)\}
\]
and define
\[
\Lambda_2 = \setcomp{v \in [u_1,\infty) \cap \Lambda_1}{k_1 \notin
\ran v}.
\]
We observe that $\Lambda_2 \cap E_{m_1}$ is empty. If $v \in
\Lambda_2$ then $v(\dom t_1) = u_1(\dom t_1) = l_1 \leq v(\eta)$ for
any $\eta \in [\dom t_1,\dom v)$, by minimality of $l_1$ and the
fact that $k_1 \notin \ran v$. Therefore, $\tau(t_1,v) = (\dom
t_1)$ and $\ell(t_1,v) = 1$. Since $t_1 \in E_{m_1}$ and
$\ell(t_1,v)$ is not an even number, we have $v \notin E_{m_1}$.

Continue by letting $m_2$ be minimal, subject to the condition that
we can find some $t_2 \in \Lambda_2 \cap E_{m_2}$. Necessarily $m_2
> m_1$. Let
\[
k_2 = \min\omega\setminus (\ran t_2 \cup \{k_1\}) > l_1, \quad l_2 =
\min\omega\setminus (\ran t_2 \cup \{k_1,k_2\}),
\]
$u_2 = t_2 \cup \{(\dom t_2, l_2)\}$ and define
\[
\Lambda_3 = \setcomp{v \in [u_2,\infty) \cap \Lambda_2}{k_2 \notin
\ran v}.
\]
As above, we find that $\Lambda_3 \cap E_{m_2}$ is empty because if
$v \in \Lambda_3$ then $\tau(t_2,v) = (\dom t_2)$ and $\ell(t_2,v)=1$,
however $t_2 \in E_{m_2}$ and $\ell(t_2,v)$ must be even if $v$ is to be
an element of $E_{m_2}$ as well.

Let $m_3 > m_2$ be minimal, subject to there being some $t_3 \in
\Lambda_3 \cap E_{m_3}$, and define
\[
k_3 = \min\omega\setminus (\ran t_3 \cup \{k_1,k_2\}) > l_2, \quad
l_3 = \min\omega\setminus (\ran t_3 \cup \{k_1,k_2,k_3\}).
\]
It should be clear how to proceed. We obtain a decreasing sequence
of sets $(\Lambda_j)_{j=1}^\infty$ and corresponding least elements
$u_j$, with the property that $\Lambda_{j+1} \cap E_m$ is empty
whenever $m \leq m_j$. Moreover, if $v \in \Lambda_{j+1}$ then
$k_1,\dots k_j \notin \ran v$.

Let $u = \bigcup_{j=1}^\infty u_j$. Being the union of an increasing
sequence of injective functions, $u$ is also injective. By
construction, we have ensured that $k_j \notin \ran u$ for all $j$,
whence $\omega\setminus\ran u$ is infinite and $u \in \Lambda$.
Moreover, $u \in \Lambda_j$ for all $j$. However, this means that
$u \notin E_m$ for any $m$, because the $m_j$ form a strictly
increasing sequence. This contradiction establishes the fact that
$D$ is not Gruenhage.
\end{proof}

\begin{cor}
The 1-point compactification $K$ of $D$ is a scattered compact
non-Gruenhage space with {\st}. By Theorem \ref{scattered} (but not
Theorem \ref{smithgru}), $C(K)$ admits an equivalent lattice norm
with a strictly convex dual norm.
\end{cor}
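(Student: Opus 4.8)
The plan is to lift each property of $D$ to its one-point compactification $K = D \cup \{\infty\}$, and then to apply Theorem \ref{scattered}. Compactness of $K$ is built into the one-point compactification, and $K$ is Hausdorff because $D$ is locally compact Hausdorff by Proposition \ref{basis}. To see that $K$ is scattered, I would take a nonempty open $E \subseteq K$. If $E \cap D$ is nonempty then, since $D$ is open in $K$ and carries its original topology as a subspace, $E \cap D$ is a nonempty open subset of the scattered space $D$ and hence has a relatively isolated point $x$; because $E \cap D$ is open in $E$, the point $x$ is isolated in $E$ as well. Otherwise $E = \{\infty\}$ and $\infty$ is isolated in $E$. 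Either way $E$ has a relatively isolated point, so $K$ is scattered.

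For the property {\st}, I would invoke the observation recorded in the introduction: $D$ has a \Gdelta-diagonal, hence has {\st}, and $D$ is locally compact, so adjoining the singleton family $\{D\}$ to a {\st}-sequence for $D$ produces a {\st}-sequence for $K$, the family $\{D\}$ separating every point of $D$ from the added point $\infty$.

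The conceptually central step is to show that $K$ is \emph{not} Gruenhage; this is exactly what rules out an appeal to Theorem \ref{smithgru}. Here I would argue by restriction to $D$. Since $\card K = \continuum$, Proposition \ref{oddity} is available: were $K$ Gruenhage, there would be a sequence $(U_n)_{n=1}^\infty$ of open subsets of $K$ such that for every pair $x,y \in K$ some $U_n$ meets $\{x,y\}$ in a singleton. Passing to the traces $U_n \cap D$, which are open in $D$, one obtains a sequence with the same separating property for all pairs in $D$, so Proposition \ref{oddity} would force $D$ to be Gruenhage, contradicting the theorem just proved. Hence $K$ is not Gruenhage. (This is simply the hereditary nature of the Gruenhage property, but routed through Proposition \ref{oddity} it costs nothing.)

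With $K$ now known to be a scattered compact space enjoying {\st}, Theorem \ref{scattered} immediately yields an equivalent lattice norm on $C(K)$ whose dual norm is strictly convex; and since $K$ is not Gruenhage, Theorem \ref{smithgru} cannot be the source of this conclusion. I expect none of these steps to present a genuine obstacle, since the real work was done in proving the theorem about $D$; the only point demanding a little care is the transfer of relative isolation from $E \cap D$ to $E$ in the scatteredness argument.
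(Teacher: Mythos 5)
Your proposal is correct and follows exactly the route the paper intends: the corollary is stated without proof because it is immediate from the theorem, the introduction's remark that a {\st}-sequence for a locally compact space extends to its 1-point compactification by adjoining the family $\{D\}$, and the fact that the Gruenhage property passes to subspaces (which you implement, harmlessly, through Proposition \ref{oddity} since $\card K = \continuum$). All the details you fill in (scatteredness of $K$, the trace argument for non-Gruenhageness) are sound.
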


\section{The space $C_0(D)$ has a $C^\infty$-smooth bump}

If $L$ is locally compact and scattered then the Banach space
$C_0(L)$ of continuous real-valued functions vanishing at infinity
is an {\em Asplund} space. Recently, a consistent negative solution
was given to the long-standing problem of whether every Asplund
space admits a {\em $C^1$-smooth bump function}, that is, a non-zero
real-valued continuously Fr\'echet differentiable function
which vanishes outside some norm-bounded set \cite{jlotod:11}. As
far as the author is aware, the question of whether such an Asplund
space can be found in ZFC, or whether an example of type $C_0(L)$
can exist, remains open. Thus, it makes sense to test $C_0(L)$
whenever a new locally compact scattered space $L$ comes along. The
purpose of this final section is to confirm that (unfortunately!)
$C_0(D)$ does admit such a function.

\begin{defn}\label{to}
Given a non-empty set $\Gamma$, we say that
$\mapping{T}{C_0(L)}{\czerok{L \times \Gamma}}$ is a (generally non-linear)
{\em Talagrand operator of class $C^\infty$} if
\begin{enumerate}
\item whenever $f \in C_0(L)$ is non-zero then there exists $(t,\gamma) \in L \times \Gamma$
such that $|f(t)| = \pnorm{f}{\infty}$ and $(Tf)(t,\gamma) \neq 0$,
and
\item for every pair $(t,\gamma)$, the map $f \mapsto (Tf)(t,\gamma)$
is $C^\infty$-smooth, i.e., has Fr\'echet derivatives of all orders,
on the set on which it is non-zero.
\end{enumerate}
\end{defn}

It follows from \cite[Corollary 3]{haydon:96} that if $C_0(L)$
admits such an operator then it admits a $C^\infty$-smooth bump
function. We shall prove that $C_0(D)$
admits such an operator. Our method follows that of \cite[Theorem
9.3]{haydon:99}, which shows that $C_0(\Upsilon)$ admits a $C^\infty$-smooth
bump function for every tree $\Upsilon$. However, since the
topology of $D$ is slightly more complicated than that of ordinary trees,
we present some of the details.

\begin{lem}\label{refinement}
Suppose that $U$ and $V$ are open subsets of $D$ such that the
restrictions $\pi\restrict{U}$ and $\pi\restrict{V}$ of the natural
projection $\pi$ are injective, and $\pi(U) = \pi(V) = (r,t]$ for some
$r \in \Lambda \cup \{0\}$, $t \in \Lambda$.
Then there exist basic open sets $W_1,\dots, W_k$ and
$W^\prime_1,\dots, W^\prime_k$ such that
\[
U = W_1 \cup \dots \cup W_k \quad\text{and}\quad V = W^\prime_1 \cup
\dots \cup W^\prime_k,
\]
and given any $i \leq k$, either $W_i = W^\prime_i$ or $W_i \cap
W^\prime_i$ is empty. Moreover, if $i \neq j$ then both $W_i \cap
W_j$ and $W^\prime_i \cap W^\prime_j$ are empty.
\end{lem}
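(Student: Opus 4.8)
The plan is to exhibit a common subdivision of the interval $(r,t]$ into finitely many basic subintervals, over each of which both $U$ and $V$ restrict to basic open sets; the matched pairs then automatically satisfy the required disjointness conditions. First I would record a reformulation. Since $\pi\restrict{U}$ is injective with $\pi(U)=(r,t]$, the set $U$ is the graph of a sign function $s\mapsto j_U(s)$ on $(r,t]$, and likewise $V$ is the graph of $s\mapsto j_V(s)$. A basic set $W(a,b,i)$ with $\pi(W(a,b,i))=(a,b]$ is completely determined by its sign $i$ at the top point $b$, since every other sign is forced by the rule $(-1)^{\ell(s,b)}i$. Consequently, two basic sets sharing the same projection $(a,b]$ are either equal (same sign at $b$) or disjoint (opposite signs, hence opposite signs throughout $(a,b]$); and two basic sets with disjoint projections are trivially disjoint. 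This is precisely the mechanism that delivers all three conclusions once the subdivision is in hand.

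The second, and main, step is to construct the subdivision. I would first check, using Lemma \ref{localext}, that the neighbourhoods $W(a,s,j_U(s))$ with top exactly at $s$ form a neighbourhood basis at $(s,j_U(s))$: given any basic neighbourhood $W(a,b,i)$ of this point with $s\preccurlyeq b$, Lemma \ref{localext} lets one shrink $a$ so that $\ell(\cdot,b)=\ell(\cdot,s)+\ell(s,b)$ on the resulting interval, which forces a set of the form $W(a',s,j_U(s))$ into $W(a,b,i)$. Hence, by openness of $U$ and $V$, for each $s\in(r,t]$ there is $c_s\in[r,s)$ with both $W(c_s,s,j_U(s))\subseteq U$ and $W(c_s,s,j_V(s))\subseteq V$ (take the larger of the two separately obtained parameters; they are comparable, as both lie below $s$, and each projection lies in $(r,t]$, forcing $c_s\succcurlyeq r$). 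Now I would peel off subintervals from the top: set $b_1=t$, let $a_1=c_{b_1}$, then continue with $b_2=a_1$, $a_2=c_{b_2}$, and so on, so long as the left endpoint exceeds $r$. This yields a strictly decreasing chain $t=b_1\succ b_2\succ\cdots$ in $(0,t]$.

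The finiteness of this process is the crux, and it comes directly from the tree axiom: $(0,t]$ is well ordered, hence contains no infinite strictly descending sequence, so the chain must terminate after finitely many steps $k$, necessarily with $a_k=r$. (This is the same mechanism by which each $W(r,v,i)$ was shown to be compact in Proposition \ref{basis}.) The intervals $(a_i,b_i]=(b_{i+1},b_i]$ then partition $(r,t]$, and over each of them both $U$ and $V$ restrict to basic sets. Finally I would put $W_i=W(a_i,b_i,j_U(b_i))$ and $W^\prime_i=W(a_i,b_i,j_V(b_i))$. These cover $U$ and $V$ respectively, since over each point of $(a_i,b_i]$ they carry the correct signs $j_U$ and $j_V$. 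For $i\neq j$ the projections are disjoint, so $W_i\cap W_j$ and $W^\prime_i\cap W^\prime_j$ are empty; and since $W_i$ and $W^\prime_i$ share the projection $(a_i,b_i]$, the reformulation of the first step shows they are either equal or disjoint. I expect the only delicate point to be the top-at-$s$ neighbourhood-basis claim; the remainder is bookkeeping with the sign rule and the well-ordering of the chain $(r,t]$.
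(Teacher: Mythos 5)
Your proof is correct and takes essentially the same approach as the paper's: both peel off basic sets from the top point $t$ downwards, at each stage choosing a single lower endpoint that works simultaneously for $U$ and $V$, with termination forced by the well-ordering of $(0,t]$ (the same mechanism as the compactness argument in Proposition \ref{basis}), and with the equal-or-disjoint alternative coming from the fact that a basic set is determined by its sign at its top point. The only cosmetic difference is that you re-derive the top-at-$s$ neighbourhood-basis fact directly from Lemma \ref{localext}, whereas the paper simply cites Proposition \ref{basis}, where that fact is established inside the compactness proof.
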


\begin{proof}
The argument is similar to the one used to show that the basis elements
are compact.
Set $t_1 = t$ and take $p_1,q_1 \in \{1,-1\}$ such that $(t_1,p_1)
\in U$ and $(t_1,q_1) \in V$. From Proposition
\ref{basis}, we can find $t_2 \in [r,t_1)$ such that $W(t_2,t_1,p_1)
\subseteq U$ and $W(t_2,t_1,q_1) \subseteq V$. Set $W_1 =
W(t_2,t_1,p_1)$ and $W^\prime_1 = W(t_2,t_1,q_1)$. If $p_1=q_1$ then $W_1 =
W^\prime_1$, and if not then $W_1 \cap W^\prime_1$ is empty. If $t_2
= r$ then stop. Otherwise, continue by finding $p_2,q_2 \in \{1,-1\}$
and $t_3 \in [r,t_2)$ such that $W(t_3,t_2,p_2)
\subseteq U$ and $W(t_3,t_2,q_2) \subseteq V$. This process stops at a finite stage $k$. Since $\pi$ is
injective on $U$, we have $U = W_1 \cup \dots \cup W_k$, and
similarly for $V$.
\end{proof}

Given $(s,i) \in D$, we define the set of `immediate successors'
$(s,i)^+ = s^+ \times \{1,-1\}$. In the next lemma, we gather together some
properties of elements in $C_0(D)$ that we need in order to define
our Talagrand operator.

\begin{lem}\label{prop}
Let $f \in C_0(D)$ and $\delta > 0$.
\begin{enumerate}
\item Given $(s,i) \in D$, there are only finitely many $(t,j) \in
(s,i)^+$ satisfying $|f(t,j)| \geq \delta$.
\item If $f$ is non-zero then there exists maximal $s \in \Lambda$,
subject to there being $i \in \{1,-1\}$ satisfying $|f(s,i)| = \pnorm{f}{\infty}$.
\item For all but finitely many $(s,i) \in D$, there exists
$(t,i) \in (s,j)^+$ such that $|f(s,i) - f(t,j)| < \delta$.
\end{enumerate}
\end{lem}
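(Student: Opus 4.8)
The plan is to prove the three properties of Lemma~\ref{prop} essentially independently, each exploiting the fundamental fact that $f \in C_0(D)$ means $\{(t,j) : |f(t,j)| \geq \delta\}$ is compact for every $\delta > 0$. Since the basic open sets $W(r,t,i)$ are compact (Proposition~\ref{basis}) and scattered, compact subsets of $D$ behave like compact subsets of trees, and I expect the arguments to parallel those used for $C_0(\Upsilon)$ in \cite[Lemma~10.1 etc.]{haydon:99}, with adjustments for the oscillating levels.

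For part (1), I would fix $(s,i) \in D$ and $\delta > 0$, and set $K_\delta = \{(t,j) \in D : |f(t,j)| \geq \delta\}$, which is compact since $f$ vanishes at infinity. The immediate successors $(s,i)^+ = s^+ \times \{1,-1\}$ form a subset of $D$, and I want to argue that $K_\delta \cap (s,i)^+$ is finite. The key observation is that distinct immediate successors $t,t' \in s^+$ are incomparable in the tree order, so the sets $W(s,t,1) \cup W(s,t,-1)$, as $t$ ranges over $s^+$, are pairwise disjoint open neighbourhoods; if infinitely many successors met $K_\delta$ we could extract a sequence with no convergent subnet living inside a single compact basic set, contradicting compactness of $K_\delta$. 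I would make this precise by covering $K_\delta$ with finitely many basic sets and noting each basic set meets only finitely many of the disjoint successor-neighbourhoods.

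Part (2) is the cleanest: if $f \neq 0$ then $\pnorm{f}{\infty} > 0$ is attained, since $A = \{(s,i) : |f(s,i)| = \pnorm{f}{\infty}\}$ is a non-empty compact subset of the scattered space $D$. The projection $\pi(A) \subseteq \Lambda$ is then a set whose elements I want to show has a maximal member under $\preccurlyeq$; I would argue that any chain in $\pi(A)$ has an upper bound attained in $A$ by compactness (a chain of comparable tree elements at which $|f|$ is maximal must stabilise or converge to a point of $A$, using continuity and the interval-topology structure), so Zorn's lemma or a direct minimal-counterexample argument yields a maximal $s$.

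Part (3) is where I anticipate the main obstacle, because it concerns a continuity/oscillation estimate rather than mere compactness, and the levels $\pm 1$ interact nontrivially. The claim is that for all but finitely many $(s,i)$, some immediate successor $(t,j) \in (s,i)^+$ has $|f(s,i) - f(t,j)| < \delta$. I would proceed by contradiction: suppose infinitely many $(s,i)$ fail this, meaning every successor differs from $(s,i)$ by at least $\delta$ in $f$-value. The delicate point is that continuity of $f$ at $(s,i)$ controls $f$ on a basic neighbourhood $W(r,s,i)$, which by its very definition records values at predecessors of $s$ with alternating signs, not at successors; so I must instead use continuity \emph{from above}, i.e. approximate $(s,i)$ by points $(t,j)$ with $t \succ s$ lying in neighbourhoods of $(s,i)$. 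Because $W(r,t,i)$ neighbourhoods of a point $(t,i)$ contain points indexed by predecessors $s \prec t$ with level $(-1)^{\ell(s,t)}i$, I would reverse the viewpoint: continuity forces $f(t,j) \to f(s,i)$ as $(t,j)$ ranges over successors approaching $(s,i)$ in the topology, and combining this with the finiteness in part (1) (applied to the set where $|f|$ is bounded below) should confine the bad points $(s,i)$ to a compact, hence finite-to-each-level, set. The crux will be correctly matching the level $j = (-1)^{\ell(s,t)}i$ so that the approximating successor has the right sign, ensuring the \emph{same}-level comparison $|f(s,i) - f(t,j)|$ is the one that continuity controls.
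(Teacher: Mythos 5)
Parts (1) and (2) of your proposal are sound. For (1) you give, in expanded form, essentially the paper's argument: $\setcomp{(t,j)}{|f(t,j)|\geq\delta}\cap(s,i)^+$ is compact and discrete, hence finite. For (2) you take a genuinely different route: the paper covers $M=\setcomp{(s,i)}{|f(s,i)|=\pnorm{f}{\infty}}$ by finitely many basic sets $W(0,s_k,i_k)$ with $(s_k,i_k)\in M$; since $\pi(W(0,s_k,i_k))=(0,s_k]$, every element of $\pi(M)$ lies below some $s_k$, and any $s_k$ maximal among $s_1,\dots,s_n$ is maximal in $\pi(M)$ --- no Zorn's lemma needed. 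Your Zorn argument can be made to work, but one detail matters: the union of an increasing chain in $\Lambda$ need not belong to $\Lambda$ (its range can fail to be coinfinite), so the upper bound must be obtained as a \emph{cluster point} in $M$ of the chain --- any cluster point $(v,k)$ has all chain elements $\preccurlyeq v$, because its basic neighbourhoods project into intervals $(r,v]$ --- not as the set-theoretic supremum, and ``converge'' should be ``cluster''.

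Part (3) is where your proposal genuinely breaks down, in two places. First, the continuity statement you want --- ``continuity forces $f(t,j)\to f(s,i)$ as $(t,j)$ ranges over successors approaching $(s,i)$ in the topology'' --- is not available and is in fact false. In this topology no successor of $s$ lies in \emph{any} neighbourhood of $(s,i)$: basic neighbourhoods $W(r,s,i)$ project onto $(r,s]$ and so consist of predecessor pairs only. Moreover, every $(t,j)$ with $t\in s^+$ is an isolated point of $D$, since $W(s,t,j)=\{(t,j)\}$, so continuity at such points carries no information either. Concretely, take $f=\ind{W(r,u,k)}$ and $s\in(r,u)$ with $i=(-1)^{\ell(s,u)}k$: then $f(s,i)=1$ while $f(t,j)=0$ for every immediate successor $t$ of $s$ off the branch through $u$, i.e.\ for all but one element of $s^+$; no convergence along successors occurs. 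Second, even granting your confinement step (which is fine: by (1) and the infinitude of $s^+$, any bad point has $|f(s,i)|\geq\delta$, so the bad set sits inside the compact set $\setcomp{(s,i)}{|f(s,i)|\geq\delta}$), the leap ``compact, hence finite-to-each-level'' is invalid: compact subsets of $D$ are typically infinite --- the basic sets $W(r,t,i)$ are themselves infinite compact sets meeting both levels in infinite sets.

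The paper avoids this entirely: using Lemma \ref{refinement} and the fact that intersections of basic sets are finite disjoint unions of basic sets, it shows $C_0(D)$ is the closed linear span of the indicators $\ind{W}$ of basic sets, observes that property (3) passes to uniform limits, and verifies (3) for finite linear combinations $\sum_k a_k\ind{W_k}$ by a purely combinatorial argument. If you want a direct argument in your spirit, the continuity you must invoke is continuity at points lying \emph{above} $s$: if $(s,j)\in W(r',u,k)$ with $s\prec u$, where $W(r',u,k)$ is chosen by continuity at $(u,k)$ so that $f$ varies by less than $\delta/2$ on it, then the immediate successor $t$ of $s$ on the branch to $u$, taken with level $j'=(-1)^{\ell(t,u)}k$, also lies in $W(r',u,k)$, whence $|f(s,j)-f(t,j')|<\delta$. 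Thus each point $(u,k)$ has a neighbourhood containing at most one bad point, namely $(u,k)$ itself, and compactness of $\setcomp{(s,i)}{|f(s,i)|\geq\delta}$ then yields finiteness. Your sketch contains neither this argument nor the paper's density argument.
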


\begin{proof}$\;$
\begin{enumerate}
\item Observe that
\[
K = \setcomp{(t,j) \in D}{|f(t,j)| \geq \delta} \cap (s,i)^+
\]
is compact and discrete, hence finite.
\item  If $f$ is non-zero then
\[
M = \setcomp{(s,i) \in D}{|f(s,i)| = \pnorm{f}{\infty}}
\]
is compact, and thus there exist finitely many pairs $(s_k,i_k)
\in M$, $k \leq n$, such that $M \subseteq \bigcup_{k=1}^n W(0,s_k,i_k)$.
Take maximal $s$ amongst the $s_k$.
\item The intersection of any two basis elements of $D$ is a finite union
of pairwise disjoint basis elements. Hence, by a standard
Stone-Weierstrass argument, $C_0(D)$ is equal to the closed linear
span of the family of indicator functions $\ind{W}$, as $W$ ranges
over the basis elements.

Thus we are done by uniform approximation if we can show that (3)
applies to finite linear combinations of the $\ind{W}$. Let
$W_1,\dots, W_n$ be basis elements, $a_1,\dots, a_n \in \real$ and
set $f = \sum_{k=1}^n a_k \ind{W_k}$. By splitting the $W_k$ into
smaller basis elements if necessary, and by using Lemma
\ref{refinement}, we can assume that whenever $k \neq l$, either
$W_k = W_l$ or $W_k \cap W_l$ is empty.

If $\pi(W_k) = (r_k,t_k]$, $k \leq n$, then set $F =
\{\oneton{r}{n}\} \cup \{\oneton{t}{n}\}$. Take any $(s,i) \in D$
satisfying $s \notin F$. Define $E$ to be the set of $k \leq n$ such
that $(s,i) \in W_k$, so that $f(s,i) = \sum_{k \in E} a_i$. From
above, we know $W_k = W_l$ and $t_k = t_l$ whenever $k,l \in E$. If
$E$ is non-empty then let's denote this common set and endpoint by
$W$ and $u$, respectively. Because $(s,i) \in W$ and $s \notin F$,
we have $s \prec u$. Take $t \in s^+$ such that $t \preccurlyeq u$,
and then $j \in \{1,-1\}$ such that $(t,j) \in W$. It is clear that
$(t,j) \notin W_k$ whenever $k \notin E$, else $W_k = W \ni (s,i)$.
In conclusion, $f(t,j) = \sum_{k \in E} a_k = f(s,i)$. If $E$ is
empty then pick any $t \in s^+$. If $(t,1) \notin W_k$ for all $k$
then we are done because $f(t,1) = 0 = f(s,i)$. If $(t,1) \in W_k$
for some $k$ then we claim that $(t,-1) \notin W_l$ for any $l$.
Certainly, $(t,-1) \notin W_k$. We have $r_k \prec t \preccurlyeq
t_k$, meaning $r_k \preccurlyeq s$, and as $s \notin F$ we know that
$r_k \prec s$. Because $(s,i) \notin W_k$, we must have $(s,-i) \in
W_k$ instead. Suppose that $(t,-1) \in W_l$ for some $l$. Then by
the same argument we have $r_l \prec s \prec t_l$ and $(s,-i) \in
W_l$. However, this implies $(t,-1) \in W_l = W_k$, which isn't so.
Therefore $(t,-1) \notin W_l$ for all $l$ and $f(t,-1) = 0 =
f(s,i)$.
\end{enumerate}
\end{proof}

\begin{prop}
The space $C_0(D)$ admits a Talagrand operator of type $C^\infty$.
\end{prop}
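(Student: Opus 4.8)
The plan is to follow the scheme of \cite[Theorem 9.3]{haydon:99}: I exhibit a (generally non-linear) operator $\mapping{T}{C_0(D)}{\czerok{D\times\nat}}$ satisfying the two conditions of Definition \ref{to} with $\Gamma=\nat$, and then invoke \cite[Corollary 3]{haydon:96} to produce the bump. Fix once and for all a $C^\infty$ function $\mapping{\chi}{\real}{[0,1]}$ with $\chi(x)=0$ for $x\le 0$, with $0<\chi(x)<1$ for $0<x<1$, and with $\chi(x)=1$ for $x\ge 1$. For $(s,i)\in D$ and $n\in\nat$ I would set
\[
(Tf)(s,i,n)=\tfrac1n\,f(s,i)\prod_{(t,j)\in(s,i)^+}\chi\big(n(f(s,i)^2-f(t,j)^2)\big).
\]
The leading factor $\tfrac1n f(s,i)$ carries the sign of $f$ at $(s,i)$ and forces decay in $n$, while each factor in the product is $1$ when the successor value $f(t,j)$ lies well below $f(s,i)$ in modulus and vanishes as soon as $f(t,j)^2\ge f(s,i)^2$. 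Thus $(Tf)(s,i,n)$ is engineered to be non-zero precisely when $(s,i)$ strictly dominates every immediate successor in modulus.

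To verify condition (1) of Definition \ref{to}, take $f\ne 0$ and use Lemma \ref{prop}(2) to find a maximal $s$ and some $i$ with $|f(s,i)|=\pnorm{f}{\infty}=:M$. By maximality every $(t,j)\in(s,i)^+$ satisfies $|f(t,j)|<M$, so each factor above has strictly positive argument and is non-zero; and for any $n>1/M^2$, Lemma \ref{prop}(1) guarantees that all but finitely many factors equal $1$, so the product is a finite product of strictly positive numbers. Hence $(Tf)(s,i,n)\ne 0$ while $|f(s,i)|=\pnorm{f}{\infty}$, as required. For the range condition $Tf\in\czerok{D\times\nat}$, fix $\ep>0$. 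Since $|(Tf)(s,i,n)|\le \tfrac1n\pnorm{f}{\infty}$, only finitely many $n$ can contribute, and for each such $n$ the inequality $|(Tf)(s,i,n)|\ge\ep$ forces every factor to be bounded below, whence (by continuity of $\chi$ and $\chi(0)=0$) there is $\delta>0$, depending only on $\ep$, $n$ and $\pnorm{f}{\infty}$, with $f(t,j)^2\le f(s,i)^2-\delta$ for all $(t,j)\in(s,i)^+$. A modulus gap of $\delta$ yields a value gap $|f(s,i)-f(t,j)|\ge\delta/(2\pnorm{f}{\infty})$ at every successor, so Lemma \ref{prop}(3) places $(s,i)$ in its finite exceptional set; thus $\{(s,i,n):|(Tf)(s,i,n)|\ge\ep\}$ is finite.

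Condition (2) of Definition \ref{to} is where the real work lies, and I expect it to be the main obstacle. Each coordinate map $f\mapsto(Tf)(s,i,n)$ is assembled from the evaluation functionals $f\mapsto f(t,j)$, which are bounded and linear on $C_0(D)$, composed with the fixed smooth function $\chi$ and with multiplication; so it is enough to show that, near any $f_0$ with $(Tf_0)(s,i,n)\ne 0$, only finitely many factors are non-constant, for then the coordinate is locally a $C^\infty$ function of finitely many continuous linear functionals, hence $C^\infty$ on its support. The key point is that on $\{(Tf)(s,i,n)\ne 0\}$ one necessarily has $f_0(s,i)^2\ge 1/n$: otherwise, since the factors tend to $\chi(nf_0(s,i)^2)<1$ as $f_0(t,j)\to 0$ along the successors, infinitely many of them would be bounded away from $1$ and the product would vanish. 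Granted $f_0(s,i)^2\ge 1/n$, Lemma \ref{prop}(1) confines the non-constant factors to the finitely many successors with $f_0(t,j)^2>f_0(s,i)^2-1/n$, and this persists on a neighbourhood of $f_0$, so the product is locally finite and smooth. It is exactly in this local-finiteness argument that the oscillating topology of $D$ intervenes, and it is the reason the control of immediate successors furnished by Lemma \ref{prop} (resting in turn on Lemma \ref{refinement}) has been isolated in advance. With the three verifications in hand, \cite[Corollary 3]{haydon:96} delivers the desired $C^\infty$-smooth bump on $C_0(D)$.
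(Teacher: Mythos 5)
Your verifications of condition (1) of Definition \ref{to} and of the range condition $Tf\in\czerok{D\times\nat}$ are sound, but your operator genuinely fails condition (2), and the failure sits exactly where your argument has its hole: the boundary case $f_0(s,i)^2=1/n$. Your local-finiteness step asserts that ``Lemma \ref{prop}(1) confines the non-constant factors to the finitely many successors with $f_0(t,j)^2>f_0(s,i)^2-1/n$''; but Lemma \ref{prop}(1) only bounds the number of successors with $|f_0(t,j)|\geq\delta$ for a \emph{strictly positive} $\delta$, so when $f_0(s,i)^2=1/n$ exactly, the non-unit factors are precisely those with $f_0(t,j)\neq 0$, and there can be infinitely many of them. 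This case is not vacuous, and at such points your coordinate map is not even continuous. Concretely: fix $n$, take $s\in\Lambda^+$ (so that $(s,i)$ is isolated in $D$) and recall that every element of $\Lambda$ has infinitely many immediate successors $t_1,t_2,\dots$; put $f_0(s,i)=n^{-1/2}$, $f_0(t_k,1)=2^{-k-1}n^{-1/2}$, and $f_0=0$ elsewhere, which defines an element of $C_0(D)$. The non-trivial factors are $\chi\bigl(1-4^{-k-1}\bigr)$, and since $\chi$ is Lipschitz we have $1-\chi(1-\eta)=O(\eta)$, so $\sum_k\bigl(1-\chi(1-4^{-k-1})\bigr)<\infty$ and hence $(Tf_0)(s,i,n)>0$. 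Now let $f_m$ agree with $f_0$ except that $f_m(t_k,1)=f_0(t_k,1)+\tfrac1m$ for $k\leq N_m$; these are still in $C_0(D)$, and for fixed $m$ the altered factors tend to $\chi\bigl(1-n/m^2\bigr)<1$ as $k\to\infty$, so by choosing $N_m$ large enough the product of the first $N_m$ of them is $<1/m$. Thus $(Tf_m)(s,i,n)\to 0$ while $\pnorm{f_m-f_0}{\infty}=1/m\to 0$: the map $f\mapsto(Tf)(s,i,n)$ is discontinuous, hence certainly not Fr\'echet smooth, at a point where it is non-zero (in particular the non-zero set is not even open), so condition (2) fails.

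The defect is structural rather than a matter of sharpening estimates: you compare $f(t,j)^2$ against the absolute threshold $f(s,i)^2-1/n$, and nothing in your formula keeps that threshold away from $0$ on the support of the coordinate map. This is exactly what the two safeguards in the paper's (Haydon's) formula provide: the prefactor $\phi(2^nf(s,i))$ vanishes unless $|f(s,i)|>2^{-n-1}$ strictly, and the factors have the ratio form $\psi\bigl(2^{-n}f(t,j)/(f(t,j)-f(s,i))\bigr)$, which are equal to $1$ as soon as $|f(t,j)|$ drops below a threshold proportional to $|f(s,i)|$, with room to spare. Consequently ``all but finitely many factors equal $1$'' becomes an open condition, stable under small perturbations of $f$, and each coordinate map depends locally on finitely many evaluations, which is what your own (correct) reduction to finitely many continuous linear functionals requires. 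Your construction can be repaired in the same spirit by one extra factor: replace the leading term $\tfrac1n f(s,i)$ by $\tfrac1n f(s,i)\,\chi\bigl(nf(s,i)^2-1\bigr)$, which forces $f(s,i)^2>1/n$ strictly on the non-zero set; then your local-finiteness argument (finitely many successors with $f(t,j)^2>f(s,i)^2-1/n>0$, persisting on a neighbourhood) goes through, and your verifications of condition (1) and of the $c_0$-range condition survive essentially unchanged.
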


\begin{proof}[Sketch proof]
We define $\mapping{T}{C_0(D)}{\czero(D \times \nat)}$ in almost
exactly the same way as in \cite[Theorem 9.3]{haydon:99}. Let
$\mapping{\phi}{\real}{[0,1]}$ be an even $C^\infty$-smooth function
satisfying $\phi(x) = 0$ for $|x|\leq \frac{1}{2}$ and $\phi(x) = 1$
for $|x| \geq 1$. Set $\psi = 1- \phi$. Given $f \in C_0(D)$, $(s,i) \in D$ and
$n \in \nat$, define
\[
(Tf)(s,i,n) = \left\{
\begin{array}{l} \text{$0$ if $f(s,i)=0$ or if there is $(t,j) \in (s,i)^+$ with $f(t,j) = f(s,i)$,}\\
{\displaystyle 2^{-n}\phi(2^nf(s,i))\prod_{(t,j) \in (s,i)^+} \psi\left(\frac{2^{-n}f(t,j)}{f(t,j)-f(s,i)}\right)} \text{ otherwise.}
\end{array}\right.
\]
To verify that $T$ is indeed a Talagrand operator of class $C^\infty$,
we simply use Lemma \ref{prop} and follow the proof of
\cite[Theorem 9.3]{haydon:99}, replacing $s$ by $(s,i)$ and $t$ by $(t,j)$
throughout.
\end{proof}

By \cite{hh:07}, it follows that $C_0(D)$ also admits $C^\infty$-smooth
partitions of unity.

\bibliographystyle{amsplain}

\end{document}